\newtheorem{theorem}{Theorem}[section]
\newtheorem{proposition}{Proposition}[section]
\DeclareMathOperator{\inertia}{inertia}
\title{Nucleation and development of multiple cracks in thin composite fibers}
\author[1]{Arnav Gupta\thanks{ag2592@cornell.edu}\textsuperscript{,}}
\author[1,2]{Timothy J. Healey}
\affil[1]{Field of Theoretical and Applied Mechanics, Cornell University, Ithaca, NY 14853, USA}
\affil[2]{Department of Mathematics, Cornell University, Ithaca, NY 14853, USA}
\date{ }
\begin{document}
\maketitle
\begin{abstract}
We study the nucleation and development of crack patterns in thin composite fibers under tension in this work. A fiber comprises an elastic core and an outer layer of a weaker brittle material. In recent tensile experiments on such composites, multiple cracks were observed to develop simultaneously on the outer layer. We propose here a simple one-dimensional model to predict such phenomenon. We idealize the problem as two axially loaded rods coupled by a linear interfacial condition. The latter can be regarded as an adhesive that resists slip between the two materials. One rod is modeled as a brittle material, and the other a linearly elastic material, both undergoing finite deformations.
\end{abstract}

\section{Introduction}
We study the nucleation and development of crack patterns in thin composite fibers under tension in this work. A fiber comprises an elastic core and an outer layer of a weaker brittle material, as illustrated in Figure\ \ref{mainfig}(a). In recent tensile experiments on such composites, multiple cracks were observed to develop simultaneously on the outer layer \cite{NC,NCP}. We propose here a simple one-dimensional model, in the spirit of Ericksen's famous paper \cite{JE}, to predict such phenomenon. We idealize the problem as two axially loaded rods coupled by a linear interfacial condition, as depicted in Figure \ref{mainfig}(b). The latter can be regarded as an adhesive that resists slip between the two materials. One rod is modeled as a brittle material, and the other a linearly elastic material, both undergoing finite deformations.

Following \cite{RH}, we employ the inverse-deformation formulation to analyze the problem under hard loading. As first observed in \cite{RH}, the infinite deformation gradient associated with fracture registers as zero inverse deformation gradient. As a consequence, the description enables more standard methods of analysis. For the brittle rod, we also incorporate a small quadratic interfacial energy in terms of the inverse-strain gradient. As shown in \cite{RH}, that model predicts spontaneous fracture with surface-energy effects at finite loading. The model from \cite{RH} incorporates neither a damage field nor pre-existing cracks. The difference here in this work is that the brittle material is now interacting with an elastic rod. The route to failure in \cite{RH} is a single crack at one or the other of the ends, after which there is no resistance under hard loading. In contrast, the interaction with the elastic core here allows for multiple, simultaneous cracks of the outer layer to develop at criticality. More importantly, the cracked outer layer continues to interact with the inner core beyond that load. Thus, crack development presents a delicate problem here, not arising in \cite{RH}. Also, our model requires a fourth order displacement formulation in the presence of a unilateral constraint. In particular, the phase plane, as employed in \cite{RH}, is not available to us. 

\begin{figure}
\centering
\includegraphics[width=\linewidth]{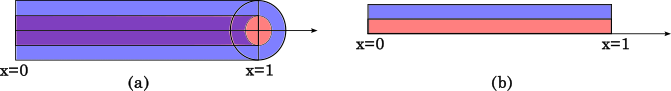}
\caption{(a) Illustrative picture of a coaxial composite fiber. (b) Axisymmetric idealization as a two-bar system.}  
\label{mainfig}
\end{figure}

The outline of the work is as follows. We present our model in Section \ref{model}. We borrow an idea from \cite{VHR}, viz., assume the modulus of the elastic core is much greater than the stiffness of the interfacial adhesion. In this way, the elastic core always behaves homogeneously; the brittle rod can deform homogeneously, while inhomogeneous states are also possible. We henceforth say that the elastic core constitutes a \emph{pseudo-rigid foundation}, in accordance with the terminology of \cite{VHR}. In Section \ref{globalbif} we provide a weak formulation of the problem, enabling a global bifurcation analysis that accounts for the possibility of fractured states along solution branches. We show that the homogeneous solution for the brittle material loses stability at a certain load, where bifurcation to an inhomogeneous family of equilibria arises. As in \cite{RH}, the local solution branch is a subcritical (unstable) pitchfork. However, the first instability here is characterized by a higher-wave solution - much like the buckling of a beam on an elastic foundation \cite{GS}. Motivated by the results from \cite{RH}, the bifurcating solution branch is expected to ``evolve'' to a fractured state - in this case characterized by multiple cracks. However, in the absence of the phase-plane arguments as in \cite{RH}, the question of subsequent fracture or not relies on numerical computation.

We introduce our computational formulation in Section\ \ref{nummeth}, employing a finite element discretization. The implementation is delicate, due to the presence of a unilateral constraint insuring that the inverse deformation gradient is non-negative. We impose the constraint on each node of the mesh and solve for equilibria using the active-set method \cite{NW}. We check the local stability of the computed equilibrium via the second-variation condition. The latter is diagnosed indirectly via the signature of the full Jacobian (of the combined equilibrium and constraint equations) \cite{NW}. We present our computational results in Section \ref{numres} for the same two sets of parameter values used in the local analysis of Section\ \ref{globalbif}. In each case, multiple, simultaneous cracks nucleate on the outer layer on the unstable pitchfork branch, after which  the solutions regain stability as the width of the cracks on the outer layer continue to evolve and widen. Lastly, we locate the critical stretch value beyond which the stable multiple-cracked solutions have lower energy than the homogeneous solution. We end with some concluding remarks in Section\ \ref{conc}.          

\section{The model}\label{model}
\begin{figure}
\centering
\includegraphics[width=0.6\linewidth]{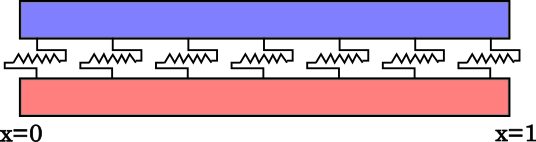}
\caption{The two bars are connected by uniformly distributed springs. This interface condition introduces a resistance to slip between the bars.}  
\label{modelfig}
\end{figure}

Neglecting the radial deformation and assuming an axisymmetric response, we idealize the problem as two axial rods coupled by a linear interface condition, as schematically illustrated in Figure\ \ref{modelfig}. Ignoring strain-gradient effects for the time being, the potential energy of the system is given by
\begin{equation}\label{prob3}
	E[\lambda,f,f_c]=\int_0^1 \left[W(f')+W_c(f_c') +\frac{1}{2}k(f-f_c)^2\right]\,dx,
\end{equation} 
where $f,f_c:\,[0,1]\rightarrow\mathbb{R}$ are deformations of the outer layer and the core, respectively, $W$ and $W_c$ are the respective stored-energy functions, and $k>0$ denotes the stiffness per unit length of the interface. The placement boundary conditions (hard loading) are prescribed as 
\begin{equation}
	f(0)=f_c(0)=0,\qquad f(1)=f_c(1)=\lambda,
\end{equation}
where $\lambda>1$ is the loading parameter. 

For the elastic core, we assume the simple energy function
\begin{equation}
	W_c(f'_c)=\frac{\gamma}{2}(f_c'-1)^2,
\end{equation}
where $\gamma>0$ is the elastic modulus. With this in hand, the first variation of \eqref{prob3} with respect to $f_c$ delivers the Euler-Lagrange equation
\begin{align}\label{ELeq1}
-\gamma f_c''+k(f_c-f)=0.
\end{align} 
Borrowing an idea from \cite{VHR}, we now assume that the stiffness of the core is much higher than the stiffness of the interface springs i.e. $\frac{k}{\gamma}\rightarrow 0$. Equation\ \eqref{ELeq1} then simplifies to
\begin{equation}\label{assump}
	f_c''=0,\quad \implies f_c=\lambda x.
\end{equation} 
Substituting \eqref{assump}\textsubscript{2} into \eqref{prob3}, we find
 \begin{equation}\label{prob5}
	E[\lambda,f]=\int_0^1 \left[W(f')+\frac{1}{2}k(f-\lambda x)^2\right]\,dx+W_c(\lambda),
\end{equation}
 where $W_c(\lambda)$ is the total energy of the core independent of $f$. 
 
Following \cite{RH}, we introduce the inverse mapping $h:\,[0,\lambda]\rightarrow[0,1]$, and include an interfacial energy in \eqref{prob5}, to obtain
 \begin{equation}\label{Estar}
E^*[\lambda,h]=E[\lambda,h^{-1}]=\int_0^\lambda \left[\frac{\epsilon}{2}(h'')^2+W^*(h')+\frac{kh'}{2}(y-\lambda h)^2 \right]\,\,dy +W_c(\lambda),
\end{equation}
\begin{equation}\label{hbc}
	h(0)=0,\quad h(\lambda)=1,\quad h'\geq 0.
\end{equation} 
Here $W^*(H)=HW(1/H)$, where $H=h'\geq 0$ is the inverse deformation gradient and $\epsilon>0$ is a small parameter. Note that $H>0$ on $[0,\lambda]$ implies that the outside material is unbroken, while $H(y)=0$ signifies breakage at $y\in [0,\lambda]$. As employed in \cite{RH}, a prototype for the stored energy function for the outside material is
\begin{equation}\label{lp}
	W(F)=\frac{\beta}{6}\left(1-\frac{1}{F}\right)^2,\qquad \implies W^*(H)=\frac{\beta}{6}H(1-H)^2,
\end{equation}
where $\beta>0$ is the apparent elastic modulus. Observe that $W^*$ is a two-well potential with wells at $H=0$ and $H=1$, while the interfacial term in \eqref{Estar} introduces a transition layer of length $\mathcal{O}(\sqrt{\epsilon})$, cf. \cite{RH}. 

Since $W_c(\lambda)$ is independent of $h$, the problem reduces to solving for minimizers of
 \begin{equation} \label{Istar}
 	I^*[\lambda,h]=E^*[\lambda,h]-W_c(\lambda)=\int_0^\lambda \left[\frac{\epsilon}{2}(h'')^2+W^*(h')+\frac{kh'}{2}(y-\lambda h)^2 \right]\,\,dy
 \end{equation} 
 As in \cite{RH}, we rescale so that the problem is defined on a fixed domain. Let $y=\lambda s,\, s\in[0,1]$ and $u(s)=h(\lambda s)-s$. Then $I^*[\lambda,h]$ scaled by $\lambda^3$ is equivalent to
\begin{equation}\label{uprob}
J^*[\lambda,u]=\lambda^3 I^*[\lambda,h]=\int_0^1\left\{ \frac{\epsilon}{2}(u'')^2+\lambda^4\left[W^*\left(\frac{1+u'}{\lambda}\right)+\frac{k\lambda}{2}u^2(1+u')\right]\right\}\,ds,
\end{equation}
and the geometric boundary conditions \eqref{hbc} become
\begin{equation}\label{ubc}
	u(0)=u(1)=0.
\end{equation}
Moreover, the unilateral constraint now reads $u'\geq -1$ on $[0,1]$.

Finally, we note that the term $u^2u'$, subject to \eqref{ubc}, is a null Lagrangian, i.e., it plays no role in the determination of critical points or equilibria. Hence, we ignore that term in what follows.

\section{Weak formulation}\label{globalbif}
We now provide a weak formulation of the problem that accounts for the possibility of fracture in the brittle layer. First, we define the Hilbert space
\begin{equation}
	\mathcal{H}=H^2(0,1)\cap H_o^1(0,1),
\end{equation}
 equipped with the inner product 
 \begin{equation}\label{norm}
 	\langle u,v\rangle=\int_0^1u''v''\,ds,
 \end{equation}
 the latter of which induces a norm equivalent to the usual one on $H^2(0,1)$. The closed convex cone of admissible states is defined by
 \begin{equation}
 	\mathcal{K}=\{v\in\mathcal{H}:v'\geq-1\},
 \end{equation}
 and we denote the open cone of ``unbroken'' states by
  \begin{equation}\label{Ko}
 	\mathcal{K}^o=\{v\in\mathcal{H}:v'>-1\},
 \end{equation}
 We note that the pointwise characterizations above makes sense due to embedding, viz., $v'$ is continuous on $[0,1]$.
 
 For $u,v\in\mathcal{K}$, and fixed $\lambda\in(0,\infty)$, consider $e_\lambda(t):=J^*[\lambda,(1-t)u+tv]$, for $t\in[0,1]$. Then $u$ is a critical point or an equilibrium (at $\lambda$) if $\dot{e}_{\lambda}\geq 0$, which yields the Euler-Lagrange variational inequality
 \begin{equation}\label{wineq}
 	\int_0^1\left[\epsilon u''(v-u)''+\lambda^3\dot{W}^*\left(\frac{1+u'}{\lambda}\right)(v-u)'+k\lambda^5u(v-u)\right]\,ds\geq 0, \text{ for all } v\in\mathcal{K}.
 \end{equation}
 
 \begin{proposition}\label{P1}
 	An interior point $u\in\mathcal{K}^o$ satisfies \eqref{wineq} (at $\lambda\in(0,\infty)$) if and only if $u\in C^4[0,1]$ and satisfies
 \begin{equation}
 	\begin{aligned}
 		& \epsilon u^{iv}-\lambda^2\ddot{W}\left(\frac{1+u'}{\lambda}\right)u''+k\lambda^5u=0 \text{ in } (0,1),\\
 		&\label{ubc2} u(0)=u(1)=u''(0)=u''(1)=0.
 	\end{aligned}
 \end{equation}
 \end{proposition}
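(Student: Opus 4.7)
The strategy is to use that $u \in \mathcal{K}^o$ lies in the interior of $\mathcal{K}$ to upgrade the variational inequality \eqref{wineq} to a variational equality over all of $\mathcal{H}$, then to bootstrap $H^2$ regularity to $C^4$, and finally to integrate by parts to extract both the Euler-Lagrange equation and its natural boundary conditions; the converse reverses the chain. For the passage from inequality to equality, the embedding $H^2(0,1) \hookrightarrow C^1[0,1]$ ensures $u'$ is continuous on $[0,1]$, so by compactness $u' \geq -1 + \delta$ for some $\delta > 0$. For any $w \in \mathcal{H}$, $w'$ is also continuous and hence uniformly bounded, so $u \pm tw \in \mathcal{K}$ for all sufficiently small $|t|$. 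Substituting $v = u \pm tw$ into \eqref{wineq}, dividing by $t$, and letting $t \to 0^+$ for both signs yields
\begin{equation*}
\int_0^1 \left[\epsilon u'' w'' + \lambda^3 \dot{W}^*\!\left(\tfrac{1+u'}{\lambda}\right) w' + k\lambda^5 u\,w\right] ds = 0, \qquad \forall\, w \in \mathcal{H}.
\end{equation*}

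For the regularity bootstrap, observe that $H := (1+u')/\lambda$ takes values in a compact subinterval of $(0,\infty)$ bounded away from zero, on which $W^*$ is smooth. Since $u' \in H^1(0,1)$, the composition $f(s) := \lambda^3 \dot{W}^*(H(s))$ lies in $H^1(0,1)$ with weak derivative $f' = \lambda^2 \ddot{W}^*(H)\,u''$. Integrating the middle term of the equality above by parts (the boundary contribution vanishes since $w(0) = w(1) = 0$) gives
\begin{equation*}
\int_0^1 \left[\epsilon u'' w'' + \left(-\lambda^2 \ddot{W}^*(H)\, u'' + k\lambda^5 u\right) w\right] ds = 0, \qquad \forall\, w \in \mathcal{H}.
\end{equation*}
Restricting to $w \in C_c^\infty(0,1)$ and interpreting in the distributional sense identifies $\epsilon u^{iv}$ with $\lambda^2 \ddot{W}^*(H)\, u'' - k\lambda^5 u \in L^2(0,1)$. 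Hence $u \in H^4(0,1) \hookrightarrow C^3[0,1]$; a final bootstrap using $u'' \in C^1$ lifts the right-hand side to $C^1$, so $u \in C^4[0,1]$ and the ODE in \eqref{ubc2} holds pointwise on $(0,1)$.

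With $u \in C^4$ in hand, integrating the equality by parts once more produces the residual endpoint term $\epsilon[u''(1) w'(1) - u''(0) w'(0)]$, while the bulk integrand vanishes by the ODE. Since $w'(0)$ and $w'(1)$ take arbitrary independent values as $w$ varies in $\mathcal{H}$, we conclude $u''(0) = u''(1) = 0$. The converse is routine: if $u \in C^4 \cap \mathcal{K}^o$ satisfies \eqref{ubc2}, integrating by parts twice (with the natural BCs killing the endpoint contributions) reproduces the variational equality for all $w \in \mathcal{H}$; then for any $v \in \mathcal{K}$, taking $w = v - u \in \mathcal{H}$ recovers \eqref{wineq} (as an equality).

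The main obstacle is the regularity step: justifying the weak chain rule for $\dot{W}^*((1+u')/\lambda)$ within the $H^2$ framework. This relies crucially on the interior condition $u \in \mathcal{K}^o$, which confines $H = (1+u')/\lambda$ to a compact interval bounded away from the singular well at $H = 0$ where $W^*$ fails to be smooth, thereby making the composition $f = \lambda^3 \dot{W}^*(H)$ inherit $H^1$ regularity from $u'$ via the standard chain rule.
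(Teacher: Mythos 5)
Your proposal is correct and reaches the same conclusion, but the regularity bootstrap is organized differently from the paper's. After obtaining the variational equality on all of $\mathcal{H}$ via interiority (which you argue exactly as the paper does, modulo the compactness remark), the paper absorbs the lowest-order term into a primitive $\int_0^s u$, integrates the $\epsilon u''\phi''$ term by parts, and thereby identifies $u'''$ directly with a \emph{continuous} function through the first-order integral identity \eqref{u3sf}; one last differentiation of that identity, using the fact that its right-hand side is $C^1$, delivers both the fourth-order ODE and the natural boundary conditions. You instead shift a derivative off the $\dot{W}^*$ term via the $H^1$ chain rule (valid since $H=(1+u')/\lambda$ is confined to a compact subinterval of $(0,\infty)$ and $u'\in H^1$), land on $\epsilon u^{iv}=\lambda^2\ddot{W}^*(H)u''-k\lambda^5 u\in L^2$ distributionally, invoke $H^4\hookrightarrow C^3$, and then bootstrap once more. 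Both arguments are sound; the paper's route avoids appealing to the chain-rule lemma and obtains continuity of $u'''$ without passing through Sobolev embeddings beyond $H^2\hookrightarrow C^1$, while yours is closer to a textbook elliptic-regularity bootstrap and makes the role of the compact range of $H$ more explicit. Two small remarks: your derived ODE carries $\ddot{W}^*$ rather than the paper's $\ddot{W}$ — your version is the one consistent with the chain rule applied to $\dot W^*$, and the paper's appears to be a dropped asterisk; and the closing comment about $W^*$ ``failing to be smooth'' at $H=0$ is misleading for the prototype \eqref{lp}, where $W^*$ is a polynomial — the essential use of interiority here is not smoothness of $W^*$ but the two-sided test-variation argument $u\pm tw\in\mathcal{K}$.
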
 
\begin{proof}
	The boundary conditions \eqref{ubc} are clear. We choose $v=u\pm t\phi$, for $\phi\in\mathcal{H}$, for $\phi\in\mathcal{H}$ and $t>0$ sufficiently small. Then \eqref{wineq} implies
 \begin{equation}\label{weqo}
 	\int_0^1\left[\epsilon u''\phi''+\lambda^3\dot{W}^*\left(\frac{1+u'}{\lambda}\right)\phi'+k\lambda^5\phi\right]\,ds=0, \text{ for all }\phi\in\mathcal{H}.
 \end{equation} 
 By imbedding, we have $u\in C^1[0,1]$, and \eqref{weqo} directly implies that the third distributional derivative of $u$ can be identified with a continuous function. Integrating the first term by parts then yields the other boundary conditions in \eqref{ubc2}\textsubscript{2} and
 \begin{equation}\label{u3sf}
 	\epsilon u'''-\lambda^3\dot{W}^*\left(\frac{1+u'}{\lambda}\right)=-k\lambda^5\int_0^su(\tau)d\tau+C \text{ in }(0,1), 
 \end{equation} 
 where $C$ is a constant. We observe that the left side of \eqref{u3sf} is equal to a continuously differentiable function; \eqref{ubc2}\textsubscript{1} follows upon differentiation of \eqref{u3sf}.
\end{proof} 
 
 For $u\in\mathcal{K}$, we define the broken set
 \begin{equation*}
 	\mathcal{B}_u:=\left\{s\in[0,1]:u'(s)=-1\right\}.
 \end{equation*}
 
 \begin{theorem}
 	Any solution $u\in\mathcal{K}$ of \eqref{wineq} (at $\lambda\in(0,\infty)$) is twice continuously differentiable on $[0,1]$.
 \end{theorem}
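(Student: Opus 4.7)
The plan is to recast the fourth-order variational inequality \eqref{wineq} as a second-order obstacle problem for $w:=u'+1$, for which $C^{1,1}$-regularity is classical.

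I would first set $w:=u'+1\in H^1(0,1)$. The unilateral constraint $u'\ge -1$ then becomes $w\ge 0$, and the Dirichlet data $u(0)=u(1)=0$ become the affine constraint $\int_0^1 w\,ds=1$; test functions $v\in\mathcal{K}$ correspond bijectively to $z:=v'+1$ in
\[
K_w:=\Bigl\{z\in H^1(0,1):z\ge 0,\ \int_0^1 z\,ds=1\Bigr\}.
\]
Substituting $u''=w'$, $(v-u)''=(z-w)'$, $(v-u)'=z-w$, and $(v-u)(s)=\int_0^s(z-w)\,d\tau$ into \eqref{wineq}, and interchanging the order of integration in the final term, reduces \eqref{wineq} to
\[
\int_0^1\bigl[\epsilon\,w'(z-w)' + A(z-w)\bigr]\,ds\ge 0,\quad\forall z\in K_w,
\]
where $A(s):=\lambda^3\dot W^*\!\bigl(w(s)/\lambda\bigr) + k\lambda^5\int_s^1 u(\tau)\,d\tau$ is continuous on $[0,1]$ since $u\in C^1[0,1]$.

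The next step is to exploit this as a second-order obstacle problem. On each connected component of the open set $\{w>0\}$, I would test with two-sided variations $z=w\pm t\eta$, where $\eta\in C_c^\infty$ is supported in the component and satisfies $\int\eta=0$; this yields the equality $\int[\epsilon w'\eta'+A\eta]\,ds=0$. A cross-component comparison (using $\eta$ built from bumps in two different components with canceling integrals) identifies a single constant $\mu_0\in\mathbb{R}$, the Lagrange multiplier associated with $\int w=1$, such that $-\epsilon w''+A=\mu_0$ holds classically on $\{w>0\}$, while $w\equiv 0$ and hence $w''=0$ a.e.\ on the complement. The variational inequality further yields the distributional complementarity
\[
-\epsilon w''+A-\mu_0\ge 0
\]
as a Radon measure on $(0,1)$. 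Since $A-\mu_0$ is continuous, it carries no atomic part; hence any atom of $-\epsilon w''$ at an interior free-boundary point $s_0\in\partial\{w>0\}\cap(0,1)$, namely $\epsilon[w'(s_0^-)-w'(s_0^+)]$, must be nonnegative. Combined with the sign conditions $w'(s_0^+)\ge 0$ and $w'(s_0^-)\le 0$ forced by $w\ge 0$ with $w(s_0)=0$, this yields $w'(s_0^-)=w'(s_0^+)=0$. The natural Neumann conditions $w'(0)=w'(1)=0$, inherited from $u''(0)=u''(1)=0$ in Proposition~\ref{P1}, handle the endpoints.

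Putting these together, I would conclude that $w\in W^{2,\infty}(0,1)\subset C^1([0,1])$, whence $u''=w'\in C^0[0,1]$ and therefore $u\in C^2[0,1]$. The main technical difficulty I anticipate is the rigorous extraction of the Lagrange multiplier $\mu_0$ as a single constant across all components of $\{w>0\}$, together with the interpretation of the complementarity condition as non-negativity of a Radon measure, which is what ultimately rules out derivative jumps of $w$ at the free boundary and delivers the continuity of $u''$.
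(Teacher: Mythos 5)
Your proof takes the same essential route as the paper's, though you unpack the steps the paper disposes of by citation. The substitution $w:=u'+1$ and the reduction to a one-dimensional second-order obstacle problem with the affine side constraint $\int_0^1 w\,ds=1$ is a cosmetic repackaging of what the paper does by testing \eqref{wineq} with $v=u+\psi$, $\psi'\ge 0$ on $\mathcal{B}_u$, and integrating the coupling term by parts. Your Lagrange multiplier $\mu_0$ is the paper's constant $C_1$, and your distributional complementarity condition ``$-\epsilon w''+A-\mu_0\ge 0$ supported on $\{w=0\}$'' is precisely the paper's non-negative measure $\mu$ on $\mathcal{B}_u$ obtained via the Riesz--Schwarz representation theorem. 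Where the paper then appeals to \cite[Theorem~2.3]{RH}, you actually carry out the decisive step: a jump in $w'$ at an interior free-boundary point $s_0$ would contribute an atom $\epsilon[w'(s_0^-)-w'(s_0^+)]\ge 0$ to the measure, while the obstacle conditions $w\ge 0$, $w(s_0)=0$ force $w'(s_0^+)\ge 0\ge w'(s_0^-)$, and the two together kill the jump. That is exactly the mechanism the paper is citing, so the mathematics underneath is identical; your version is simply more self-contained.

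The one slip is the appeal to Proposition~\ref{P1} for the boundary conditions $u''(0)=u''(1)=0$: that proposition is stated only for interior points $u\in\mathcal{K}^o$, whereas the theorem concerns general $u\in\mathcal{K}$. If an endpoint lies in $\mathcal{B}_u$, the natural boundary condition of Proposition~\ref{P1} cannot be invoked as stated; you would instead run the same one-sided sign-and-complementarity argument at that endpoint (which goes through with no essential change), or, if $w>0$ there, note that $w$ solves the ODE classically up to the boundary. This is a small, repairable gap and does not undermine the argument.
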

 \begin{proof}
 	We let $\psi\in\mathcal{H}$ with $\psi'\geq 0$ on $\mathcal{B}_u$, and set $v=u+\psi\in\mathcal{K}$ in \eqref{wineq} to find
 	 \begin{equation*}
 	\int_0^1\left[\epsilon u''\psi''+\left\{\lambda^3\dot{W}^*\left(\frac{1+u'}{\lambda}\right)-k\lambda^5\left(\int_0^1u(\tau)d\tau\right)\right\}\psi'\right]\,ds\geq 0,
 \end{equation*}
 for all such $\psi$. By the Riesz-Schwarz theorem \cite{KS}, there is a non-negative measure $\mu$, with support in $\mathcal{B}_u$, such that
 \begin{equation}\label{u3ineq}
 	-\epsilon u'''+\lambda^3\dot{W}^*-k\lambda^5\int_0^su(\tau)d\tau-C_1=\mu
 \end{equation}
 in the distributional sense, where $C_1$ is a constant. Integration of \eqref{u3ineq} then yields
 \begin{equation}
 	\epsilon u''=\lambda^3\int_0^s\left\{\dot{W}^*\left(\frac{1+u'(\xi)}{\lambda}\right)-k\lambda^5\int_0^\xi u(\tau)\,d\tau\right\}\,d\xi-C_1s-C_2-\phi(s),
 \end{equation}
 where $C_2$ is another constant and $\phi'=\mu$ in the sense of distributions. Since $u\in C^1[0,1]$ and $\phi$ is non-decreasing, the same argument used in \cite[Theorem~2.3]{RH}, can be employed here to deduce that $u''$ is continuous on $[0,1]$.
 \end{proof}
 
 For the purpose of a global bifurcation analysis that includes the possibility of broken solutions ($\mathcal{B}_u$ is non-empty), we follow the approach in \cite{LS}, also employed in \cite{RH}. First, we express \eqref{wineq} abstractly via
 \begin{equation}\label{opeq}
 	\langle\epsilon u+G(\lambda,u),v-u\rangle\geq 0 \text{ for all } v\in\mathcal{K},
 \end{equation} 
 where $G:(0,\infty)\times\mathcal{H}\rightarrow\mathcal{H}$ is the compact mapping defined by
 \begin{equation}\label{Gdef}
 	\langle G(\lambda,u),\phi\rangle:=\int_0^1\left\{\lambda^3\dot{W}^*\left(\frac{1+u'}{\lambda}\right)-k\lambda^5\int_0^su(\tau)d\tau\right\}\phi'\,ds,
 \end{equation}
for all $\phi\in\mathcal{H},\,\lambda\in(0,\infty)$. 

Let $P_{\mathcal{K}}$ denote the closest-point projection from $\mathcal{H}$ onto $\mathcal{K}$, and define $F:=P_{\mathcal{K}}\circ G$. Then \eqref{opeq} is equivalent to the operator equation
 \begin{equation}\label{Fdef}
 	\epsilon u+F(\lambda,u)=0,
 \end{equation}
 where $F:(0,\infty)\times\mathcal{K}\rightarrow\mathcal{K}$ is continuously differentiable and compact (since $P_{\mathcal{K}}$ is continuous). Thus, the Leray-Schauder degree is well defined for the mapping $u\mapsto\epsilon u+F(\lambda,u)$, for each $\lambda\in(0,\infty)$, e.g., \cite{K}.
 
 Clearly, $0\in\mathcal{K}^o$, and thus, $F(\lambda,0)\equiv P_{\mathcal{K}}(G(\lambda,0))$. From \eqref{Gdef}, we find that
 \begin{equation*}
 	\langle G(\lambda,0),\eta\rangle=0, \text{ for all } \eta\in\mathcal{H},\,\lambda\in(0,\infty) \implies
 \end{equation*}
 $F(\lambda,0)=G(\lambda,0)\equiv 0$, i.e., \eqref{Fdef} admits the trivial line of solutions $u=0$ for all $\lambda\in(0,\infty)$. The rigorous linearization of \eqref{Fdef} about the trivial line is denoted abstractly via
 \begin{equation}\label{Ldef}
 	\epsilon\eta+L(\lambda)\eta=0, \text{ for all } \eta\in\mathcal{H},
 \end{equation}
 where $L(\lambda):=D_uG(\lambda,0)$ denotes the Fr\'echet derivative with respect to $u$. From \eqref{norm} and \eqref{Gdef}, we see that \eqref{Ldef} is equivalent to
 \begin{equation}\label{lwko}
 	\int_0^1\epsilon \eta''\phi''+\lambda^2\left\{\ddot{W}^*\left(\frac{1}{\lambda}\right)\eta'-k\lambda^3\int_0^s\eta(\tau)d\tau\right\}\phi'\,ds=0,
 \end{equation}
for all $\eta,\phi\in\mathcal{H},\,\lambda\in(0,\infty)$. Arguing as in the proof of Proposition \ref{P1}, \eqref{lwko} implies
 \begin{equation}\label{u3sf2}
 	\begin{aligned}
 		&\epsilon \eta'''-\lambda^2\ddot{W}^*\left(\frac{1}{\lambda}\right)\eta'=-k\lambda^5\int_0^s\eta(\tau)d\tau+C \text{ in } (0,1),
 		& \eta''(0)=\eta''(1)=0,
 	\end{aligned}
 \end{equation} 
 where $C$ is a constant. As before, we may rigorously differentiate \eqref{u3sf2}\textsubscript{1} to obtain the linearized problem
 \begin{equation}
 \begin{aligned}
 \label{lin}
 	& \epsilon\eta^{iv}-\lambda^2\ddot{W}\left(\frac{1}{\lambda}\right)\eta''+k\lambda^5\eta=0 \text{ in } (0,1),\\
 	& \eta(0)=\eta(1)=\eta''(0)=\eta''(1)=0,
 \end{aligned}
 \end{equation}
 which is equivalent to \eqref{Ldef}. 
  
 The linear problem \eqref{lin} admits nontrivial solution of the form $\eta_n(s)=\sin(n\pi s)$ whenever $\lambda=\lambda_n$ is a root of the characteristic equation
\begin{equation}\label{char}
\epsilon (n\pi)^4+\lambda^2\ddot{W}^*\left(\frac{1}{\lambda}\right)(n\pi)^2+\lambda^5 k=0.
\end{equation} 
We find such roots by plotting 
\begin{equation}\label{char1}
	k(\lambda)=-\epsilon\frac{(n\pi)^4}{\lambda^5}-\ddot{W}^*\left(\frac{1}{\lambda}\right)\frac{(n\pi)^2}{\lambda^3},
\end{equation} 
for different values of $n$, and looking for intersections with horizontal lines, $k=\text{constant}$. Figure \ref{kvsl} depicts the first five curves for $W^*$ given by \eqref{lp} and parameter values $\epsilon=0.03,\,\beta=3$. We concentrate only on \textit{generic} values of $k$: There are either two or no intersections for a given value of $n$; values where two curves for different values of $n$ intersect are avoided. For example, for $k=2$, there are no roots for $n=1,2$, while there are two roots for each of the other values for $n>2$. For a given generic value of $k$, the smallest possible root of \eqref{char}, denoted $\lambda=\lambda_n$, is the critical (potential) bifurcation. For example, at $k=2$, the smallest root $\lambda_3$ occurs on the $n=3$ curve, while for $k=2.5$, the smallest root $\lambda_4$ occurs on the $n=4$ curve. These are marked at the points P and Q, respectively, on Figure \ref{kvsl}. The graphs give good approximate values, and we find $\lambda_3=2.4490$ and $\lambda_4=2.8561$ via Newton's method. 

   \begin{figure}
 \centering
\includegraphics[width=0.8\linewidth]{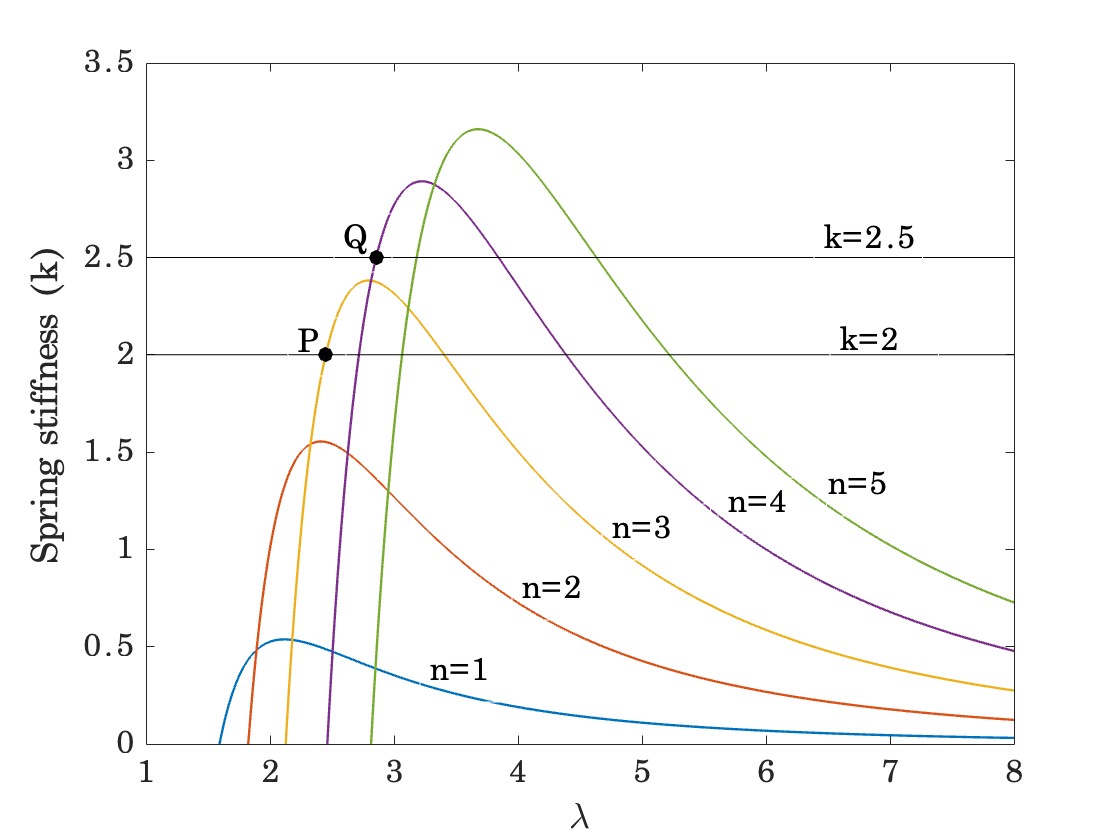}
\caption{First five curves ($n$ equal to 1 to 5) of \eqref{char1} for $W^*$ given by \eqref{lp} and parameter values $\epsilon=0.03,\,\beta=3$. For $k=2$, $n=3$, $\lambda_3=2.449$; for $k=2.5$, $n=4$, $\lambda_4=2.8561$.}   
\label{kvsl}
\end{figure}    

Before exploring bifurcation, we show that there is an exchange of stability along the trivial solution path, $u\equiv 0$, for all $1\leq \lambda <\infty$, at a critical root of the characteristic equation. As such, we first compute the second variation of the functional \eqref{uprob}:
\begin{equation}\label{ustab}
	\delta^2 J^*[\lambda,u]=\int_0^1\left[\epsilon(\eta'')^2+\lambda^2\ddot{W}^*\left(\frac{1+u'}{\lambda}\right)(\eta')^2+\lambda^5k\eta^2\right]\,ds,
\end{equation}
for all $\eta\in\mathcal{H}$. By virtue of Proposition\ \ref{P1}, we work in the class of functions $C^4[0,1]$ satisfying the boundary conditions \eqref{lin}\textsubscript{2}. Integration by parts and subsequent evaluation at $u\equiv 0$ yields
\begin{equation}\label{stab_triv}
	\delta^2 J^*[\lambda,0]=\int_0^1\left[\epsilon\eta^{iv}-\lambda^2\ddot{W}^*\left(\frac{1}{\lambda}\right)\eta''+\lambda^5 k \eta\right]\eta\,ds.
\end{equation}
Generally, if $\delta^2 J^*[\lambda_0,0]>0$ for all $\eta\in\mathcal{H}$, then we say that an equilibrium $u\equiv 0$ at $\lambda=\lambda_0$ is \textit{locally stable}. If $\delta^2 J^*[\lambda_0,0]$ is indefinite, then we say that the equilibrium there is \textit{unstable}. We note that the term in brackets in \eqref{stab_triv} agrees with the left side of equation \eqref{lin}. Now consider the eigenvalue problem
\begin{equation}\label{char2}
	 \begin{aligned}
 	& \epsilon\eta^{iv}-\lambda^2\ddot{W}\left(\frac{1}{\lambda}\right)\eta''+\lambda^5k\eta=\lambda^5\sigma\eta \text{ in } (0,1),\\
 	& \eta(0)=\eta(1)=\eta''(0)=\eta''(1)=0,
 \end{aligned}
 \end{equation}
subject to the same boundary conditions as in \eqref{lin}, where $\lambda^5 \sigma$ is the scaled eigenvalue. Clearly the same nontrivial solutions $\eta_n=\sin(n\pi s)$ arise for roots of the characteristic equation \eqref{char}, but now with $k-\sigma$ in place of $k$. The roots can again be deduced graphically as in Figure \ref{kvsl}; call the smallest root $\tilde{\lambda}_n$. In particular, note that $\sigma>0\Leftrightarrow\tilde{\lambda}_n<\lambda_n$. And $\sigma<0\Leftrightarrow\tilde{\lambda}_n>\lambda_n$ until a second root corresponding to the same $n$ appears. We conclude that \eqref{stab_triv} is positive, and hence, the trivial solution is stable for $\tilde{\lambda}_n<\lambda_n$. For $\tilde{\lambda}_n>\lambda_n$ the solution is unstable, at least until a second root for the same value of $n$ appears.

To establish bifurcation at $(\lambda_n,0)$, it is sufficient to show that $L'(\lambda_n)\eta_n$ does not belong to the range of the linear operator $L(\lambda_n)$, where $L'(\lambda)$ denotes partial differentiation with respect to $\lambda$ \cite{CR}. Since the operator defined by the left side of \eqref{Ldef} is formally self-adjoint, partial differentiation of \eqref{lwko} with respect to $\lambda$ leads to the sufficient condition
\begin{equation*}
\begin{aligned}
	\langle \eta_n, L'(\lambda_n) (\lambda_n,0)\eta_n\rangle=&\left\{n^2\pi^2\left[2\lambda_n\ddot{W}^*\left(\frac{1}{\lambda_n}\right)-\dddot{W}^*\left(\frac{1}{\lambda_n}\right)\right]+5\lambda_n^4k\right\}\\
	&\qquad\qquad\qquad\qquad\qquad\qquad\qquad\cdot\int_0^1\cos^2(n\pi s)\,ds\neq 0,
\end{aligned}
\end{equation*}
or 
\begin{equation}
	\label{CRcond}n^2\pi^2\left[2\lambda_n\ddot{W}^*\left(\frac{1}{\lambda_n}\right)-\dddot{W}^*\left(\frac{1}{\lambda_n}\right)\right]+5\lambda_n^4k\neq0.
\end{equation}
Notice that \eqref{CRcond} simply states that the root $\lambda=\lambda_n$ of \eqref{char} is simple. 

\begin{theorem}
	Let $\mathcal{S}$ denote the closure of the set of all nontrivial solution pairs of \eqref{Fdef}. Assume that $\lambda=\lambda_n$ is simple root of \eqref{char}, viz., condition \eqref{CRcond} is satisfied. Then $\mathcal{S}$ has a component $\Sigma_n\subset\mathbb{R\times\mathcal{K}}$ containing $(\lambda_n,0)$ satisfying at least one of the following:
	\begin{enumerate}[(i)]
		\item\label{gf1}$\Sigma_n$ is unbounded in $\mathbb{R}\times\mathcal{H}$;
		\item\label{gf2}$(\lambda_*,0)\in\Sigma_n$, where $\lambda_*\neq\lambda_n$.
	\end{enumerate}
	Moreover, in a sufficiently small neighborhood of $(\lambda_n,0)$, $\Sigma_n$ is characterized uniquely by a (pitchfork) curve of solutions of the form 
	\begin{equation}\label{solut}
	\lambda=\lambda_n+o(\tau),\, u=\tau\sin(n\pi s)+o(\tau),\, \text{as}\, \tau\rightarrow 0.
\end{equation}  
\end{theorem}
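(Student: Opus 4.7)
I would split the statement into its local (pitchfork) and global (dichotomy) halves, treating them with the classical Crandall--Rabinowitz and Rabinowitz global-bifurcation theorems respectively. The only nonstandard feature is that the operator $F=P_{\mathcal{K}}\circ G$ is merely continuous on the cone $\mathcal{K}$; the key enabling observation is that $P_{\mathcal{K}}$ reduces to the identity in an $\mathcal{H}$-neighbourhood of $0$, so near the trivial line $u\equiv 0$ the equation \eqref{Fdef} is actually smooth.

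For the local part, by the embedding $\mathcal{H}\hookrightarrow C^1[0,1]$ there is an $\mathcal{H}$-ball $B_{\rho}(0)$ every element of which satisfies $v'>-1$, hence lies in $\mathcal{K}^o$; thus $P_{\mathcal{K}}$ is the identity on $B_{\rho}(0)$. Since $G$ is continuous with $G(\lambda,0)=0$, for $(\lambda,u)$ near $(\lambda_n,0)$ we have $F(\lambda,u)=G(\lambda,u)$, which inherits the smoothness of $W^*$. Under the genericity noted around \eqref{char1}, the kernel of $\epsilon I+L(\lambda_n)$ is one-dimensional and spanned by $\eta_n=\sin(n\pi s)$, and condition \eqref{CRcond} is exactly the Crandall--Rabinowitz transversality $L'(\lambda_n)\eta_n\notin\mathrm{Range}(\epsilon I+L(\lambda_n))$. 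Crandall--Rabinowitz then furnishes a unique $C^1$ branch $(\lambda(\tau),u(\tau))$ with $u(\tau)=\tau\eta_n+o(\tau)$ and $\lambda(\tau)\to\lambda_n$. To upgrade this to the pitchfork form $\lambda(\tau)=\lambda_n+o(\tau)$, I would use the standard reduction formula: since the operator in \eqref{lin} is formally self-adjoint, $\lambda'(0)$ is a nonzero multiple of $\langle D^2_{uu}G(\lambda_n,0)[\eta_n,\eta_n],\eta_n\rangle$, which after integration by parts reduces to a constant times $\int_0^1(\eta_n')^3\,ds=(n\pi)^3\int_0^1\cos^3(n\pi s)\,ds$. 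Using $\cos^3(x)=(3\cos x+\cos 3x)/4$ this integral vanishes, giving $\lambda'(0)=0$ and the pitchfork form \eqref{solut}.

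For the global part, the map $u\mapsto\epsilon u+F(\lambda,u)$ is a compact perturbation of $\epsilon I$ on $\mathcal{H}$, so its Leray--Schauder degree is well defined on any ball missing nontrivial zeros. Near $(\lambda_n,0)$ the smooth identification $F=G$ reduces the degree computation to the classical self-adjoint setting, where the simple-eigenvalue crossing produces an odd jump in the degree as $\lambda$ traverses $\lambda_n$. Rabinowitz's global bifurcation theorem then yields a connected component $\Sigma_n\subset\mathcal{S}$ of nontrivial solutions through $(\lambda_n,0)$ which is either unbounded in $\mathbb{R}\times\mathcal{H}$ (alternative \ref{gf1}) or meets the trivial line again at some $(\lambda_*,0)$ with $\lambda_*\neq\lambda_n$ (alternative \ref{gf2}).

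The chief obstacle is justifying this application of classical smooth bifurcation theory given that the equation is posed on a cone with a non-smooth projection. The local identity $F=G$ near the trivial line resolves this cleanly for both the Crandall--Rabinowitz analysis and the degree-parity computation. Globally, the component $\Sigma_n$ may cross into fractured states where $u\in\partial\mathcal{K}$ and $F$ is no longer differentiable, but Rabinowitz's theorem demands only continuity and compactness of $F$, so this is harmless -- and in fact is exactly the phenomenon anticipated in the paper's subsequent numerical investigation.
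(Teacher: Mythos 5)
Your argument is correct and follows the same overall strategy as the paper: Crandall--Rabinowitz for the local pitchfork, a Leray--Schauder degree parity jump across $\lambda_n$, and the Rabinowitz global alternative. Two places in your write-up are actually more explicit than the paper's proof. First, your observation that the embedding $\mathcal{H}\hookrightarrow C^1[0,1]$ forces $P_{\mathcal{K}}$ to be the identity on an $\mathcal{H}$-ball about $0$, so that $F=G$ is smooth near the trivial line, is exactly the ingredient needed to license both the Crandall--Rabinowitz application and the degree-linearization computation; the paper asserts the differentiability of $F$ without isolating this point, and your remark is a clean justification. Second, the paper dispatches the pitchfork form $\lambda=\lambda_n+o(\tau)$ by appealing to ``direct calculations or hidden symmetry arguments,'' whereas you carry out the direct calculation: $\lambda'(0)$ is a nonzero multiple of $\langle D^2_{uu}G(\lambda_n,0)[\eta_n,\eta_n],\eta_n\rangle$, which reduces to a constant times $\int_0^1 \cos^3(n\pi s)\,ds=0$. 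One small caveat: when you say the degree computation reduces to the ``classical self-adjoint setting,'' you pass over the fact that the relevant eigenvalue problem \eqref{char2} carries a nontrivial weight --- the paper records this as $\epsilon\eta+L(\lambda)\eta=\lambda^5\sigma T\eta$ with $T\eta=\int_0^s\eta$, and invokes the exchange-of-stability discussion to conclude the parity change. Your route is to read the parity change directly from transversality \eqref{CRcond} (which guarantees the simple eigenvalue of $\epsilon\mathcal{I}+L(\lambda)$ in the $\mathcal{H}$-inner product crosses zero transversally); that is also legitimate, but it is worth being precise about which eigenvalue problem one means, since the weighted and unweighted spectra are not the same even though they share a kernel at $\lambda_n$.
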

\begin{proof}
	As already discussed, the presumed simplicity of the root \eqref{CRcond} insures the existence of the unique local path of non-trivial solutions of \eqref{Fdef} and hence of \eqref{ubc2}. That the path is a so-called pitchfork, viz., $o(\tau)$ vs.\ $O(\tau)$ in \eqref{solut}, follows either by direct calculations or via ``hidden'' symmetry arguments, e.g., \cite{HP}.
	
	To verify the alternatives (\ref{gf1}), (\ref{gf2}), we first note that $L(\lambda):\mathcal{H}\rightarrow\mathcal{H}$ is a compact map. Accordingly, the Leray-Schauder linearization principle shows that the topological degree of $u\mapsto\epsilon u+F(\lambda,u)$ on some ball $B_r(0)\in\mathcal{H}$, centered at $u\equiv 0$ of sufficiently small radius $r>0$, is given by
	\begin{equation}
		\deg(\epsilon\mathcal{I}+F(\lambda,\cdot),B_r(0),0)=(-1)^{m(\lambda)},
	\end{equation}  
	where $m(\lambda)$ denotes the number of negative eigenvalues counted by algebraic multiplicity of the linear map $\epsilon\mathcal{I}+L(\lambda)$, for $\lambda\in(a,\lambda_n)\cup(\lambda_n,b)$, where both $\lambda_n-a>0$ and $b-\lambda_n>0$ are sufficiently small. We observe that \eqref{char2} is the same as 
	\begin{equation}
		\epsilon\eta+L(\lambda)\eta=\lambda^5\sigma T\eta, \text{ for all }\eta\in\mathcal{H},
	\end{equation}
	where $T\eta:=\int_0^s\eta(\tau)d\tau$. Consequently, the exchange-of-stability argument given just after \eqref{char2} imply
	\begin{equation}
		\deg(\epsilon\mathcal{I}+F(\lambda_1,\cdot),B_r(0),0)\neq\deg(\epsilon\mathcal{I}+F(\lambda_2,\cdot),B_r(0),0),
	\end{equation}
	for $\lambda_1\in(a,\lambda_n)$ and $\lambda_2\in(\lambda_n,b)$. Thus, (\ref{gf1}), (\ref{gf2}) follow from the theorem of Rabinowitz \cite{R}.
\end{proof}

\section{Numerical Continuation}\label{nummeth}
The existence results of the previous section do not address if cracks actually emerge on global solution branches. For that, we turn to numerical continuation methods using a finite element discretization. Consider the weak form of \eqref{u3ineq}
\begin{equation}\label{wf}
	\int_0^1 \left\{\epsilon u''\psi''+\lambda^3\dot{W}^*\left(\frac{1+u'}{\lambda}\right)\psi'+\lambda^5ku\psi\right\}ds=\int_{\mathcal{B}_u} \{\mu\psi' +\zeta (u'+1)\}\,ds.
\end{equation}
where the measure $\mu$ in \eqref{u3ineq} is now assumed to belong to $L^\infty(0,1)$, and $\zeta\in L^\infty(0,1)$ is defined as an admissible variation.  The function $\mu$ is interpreted as the Lagrange multiplier field associated with the inequality constraint $u'+1\geq 0$ such that 
\begin{equation}
	\mu(s)\geq 0, \text{ and } \mu(s)(u'(s)+1)=0 \text{ for all } s\in[0,1].
\end{equation}
 
The domain $[0,1]$ is divided into $N$ elements of equal length and $N+1$ nodes. The values of $u$, $u'$,  and $\mu$ represented by $u_k$, $u'_k$ and $\mu_k$ at the $k^{th}$-node are unknowns. To evaluate the left side of \eqref{wf}, we interpolate $\{u_k\}$ using piecewise cubic Hermite shape functions. As used in Euler-Bernoulli beam problems, these provide sufficient continuity across elements such that interpolated functions lie in ${H}^2(0,1)$ \cite{TH}. The integrand is evaluated at Gauss points and the complete integral is evaluated using quadrature. 

The right side of \eqref{wf} is evaluated as follows:
\begin{equation}\label{sright}
	\int_{\mathcal{B}_u} \{\mu\psi' +\zeta (u'+1)\}\,ds\approx\sum_{s_k\in \mathcal{B}_u^h}\{\mu_k\psi'_k+\zeta_k(u'_k+1)\}.
\end{equation}
where $\mathcal{B}_u^h:=\{s_k: k\in 1,2,\cdots, N+1;\,s_k\in \mathcal{B}_u\}$ is the finite dimensional approximation of $\mathcal{B}_u$. The interpolation \eqref{sright} implies that constraints $u'_k+1= 0$ are imposed on the discretized problem at all nodal points $s_k\in \mathcal{B}_u^h$. For a solution $(\lambda,u)\in (0,\infty)\times\mathcal{K}^o$ in the interior of the cone, the set $\mathcal{B}_u^h$ is empty and the right side of \eqref{wf} is zero. When $\mathcal{B}_u^h$ is not empty, it contains position of nodes where $u'_k+1=0$. This is determined using the active-set method \cite{NW} and is discussed further in the next subsection.    

The active-set method works within pseudo-arc length continuation \cite{Ke}. We drop the natural parametrization of the solution by $\lambda$ and increment over the arc-length of the curve. This allows the algorithm to traverse past singularities on the solution curve. 

\subsection{Active-set method}
The active-set $\mathcal{B}^h_u$ is the set of all nodes where the inequality constraint is active, i.e. $u'_k+1=0,$ for $s_k\in\mathcal{B}_u^h$. If $\mathcal{B}^h_u$ is known, then the inequality constraints are imposed as equality constraints at the nodal points $s_k\in\mathcal{B}^h_u$, and the solution is found using Newton's method. However, the active-set $\mathcal{B}^h_u$ is generally not known and is found in conjunction with equilibrium. We start by making a guess of the optimal $\mathcal{B}_u^h$ and solving for equilibrium. If the solution satisfies conditions
\begin{equation}\label{cond1}
	\mu_k\geq 0 \text{ for all } s_k\in\mathcal{B}^h_u,
\end{equation}
and 
\begin{equation}\label{cond2}
	 u'_k+1\geq 0 \text{ for all } s_k\notin \mathcal{B}_u^h,
\end{equation}
then the solution is in the feasible region. If the conditions fail, then one node is either added in or removed from the active-set $\mathcal{B}^h_u$. If \eqref{cond1} fails, then the node with minimum value of $\mu$ is removed from the active-set. If \eqref{cond1} is satisfied but \eqref{cond2} fails, then the node with minimum value of $u'$ is added in the active set. In each iterate only one node is moved, and the procedure is repeated until a solution in the feasible region is found. More details related to convergence of the method are in \cite{NW}.

The procedure is sensitive to the initial guess of the active-set $\mathcal{B}^h_u$ and the initial guesses of $\{u_k\},\,\{u'_k\},$ and $\{\mu_k\}$ in Newton's methods. Different choices leads to different iteration sequences and often different equilibrium solutions. Since the algorithm works within a continuation scheme, we use information from previous point as initial guesses while solving for a new point on the solution curve.

\subsection{Stability analysis}
A solution $(\lambda_0,u_0)\in \mathcal{K}$ is \emph{locally stable} if $\delta^2J^*[\lambda_0,u_0]$ defined in \eqref{ustab} is positive for all $\eta\in\mathcal{H}$, $\eta'= 0$ on $\mathcal{B}_u$. Although variations $\eta'>0$ on $\mathcal{B}_u$ lie in the admissible set of the minimization problem, they imply healing in the material. This phenomena is not considered here, and such variations are not included in the determination of stability of an equilibrium. 

We determine stability of an equilibrium from eigenvalues of the stiffness matrix obtained using the finite element discretization. This stiffness matrix is square and symmetric of size $2N+M$, where $M$ is number of nodes in $\mathcal{B}^h_u$. It is of the form
\begin{equation}
	K=\begin{bmatrix}
		G & A^T\\
		A & 0
	\end{bmatrix}
\end{equation}
where $G$ is a square matrix of size $2N$ obtained from taking variation of left side of \eqref{wf} and imposing geometric boundary conditions. Matrix $A$ is of size $M\times (2N)$ obtained from the derivative of constraints $u'_k+1=0$ on $\mathcal{B}^h_u$. We define the inertia of the matrix $K$ as
\begin{equation}
	\inertia(K)=(n_+,n_-,n_0)
\end{equation}
where $n_+$, $n_-$, and $n_0$ are the number of positive, negative, and zero eigenvalues of $K$. 

Upon discretization, the space of variations $\eta\in\mathcal{H}$, $\eta'=0$ on $\mathcal{B}_u$ is approximated using finite dimensional vectors $\{\eta_k\}$ and $\{\eta_k'\}$ such that $\eta'_k=0$ for all $s_k\in\mathcal{B}^h_u$. These vectors lie in the null space of matrix $A$. Let $Z$ be a $(2N)\times M$ sized matrix such that its columns span the null space of $A$. Then the equilibrium solution is stable if the matrix $Z^TGZ$ is positive definite. It follows from Theorem 16.3 in \cite{NW} that $Z^TGZ$ is positive definite if 
\begin{equation}\label{inertia_cond}
	\inertia(K)=(2N,M,0).
\end{equation}  
For every equilibrium point we evaluate the inertia of $K$. If it matches \eqref{inertia_cond}, the solution is locally stable. Otherwise, it is unstable.

\section{Numerical results}\label{numres}

We present numerical results for the data sets discussed in Section \ref{globalbif}. As before, we set $\epsilon=0.03$ and $\beta=3$, for which we consider the cases $k=2$ and $k=2.5$. Recall that for $k=2,$ the first bifurcation is at $\lambda=2.4490$, corresponding to point P in Figure\ \ref{kvsl}, with associated mode number $n=3$. For $k=2.5$, the first bifurcation is at $\lambda=2.8561$, corresponding to point Q in Figure\ \ref{kvsl}, and the associated mode number is $n=4$. We use 100 elements for the finite element discretization and relative and absolute tolerances of $10^{-7}$ in the Newton's method. Further refinement of the mesh did not provide more accuracy in the results presented here.

In Figure\ \ref{stress3}, both the macroscopic stress and energy for the brittle bar vs. the stretch parameter $\lambda$ for the case $k=2$ are shown. At any solution pair $(\lambda,u)$, we compute the energy $I^*$ using \eqref{uprob}, and we compute the macroscopic stress via
\begin{equation}\label{sigdef}
	\sigma(\lambda)=\left.\frac{d }{d\tau}I^*[\tau,u(\tau)]\right\rvert_{\tau=\lambda},
\end{equation}
which follows from Castigliano's first theorem \cite{TY}. For instance, along the trivial solution $u\equiv 0$, $I^*[\lambda,0]=\lambda W^*(1/\lambda)=W(\lambda)=\frac{\beta}{6}\left(1-\frac{1}{\lambda}\right)^2$, the latter two equalities of which follow from the definition of $W^*$ and \eqref{lp}. From \eqref{sigdef}, the macroscopic stress is then given by $\sigma=\dot{W}(\lambda)$, which is simply the uniform-stretch constitutive law for the brittle rod. For numerical solutions, we compute $I^*$ at points along the solution curve and then evaluate \eqref{sigdef} via linear interpolation. In any case, \eqref{sigdef} represents the difference between the stress required to pull the composite rod minus the stress in the pseudo-rigid elastic bar at $\lambda$.

\begin{figure}[H]
\centering
\includegraphics[width=\linewidth]{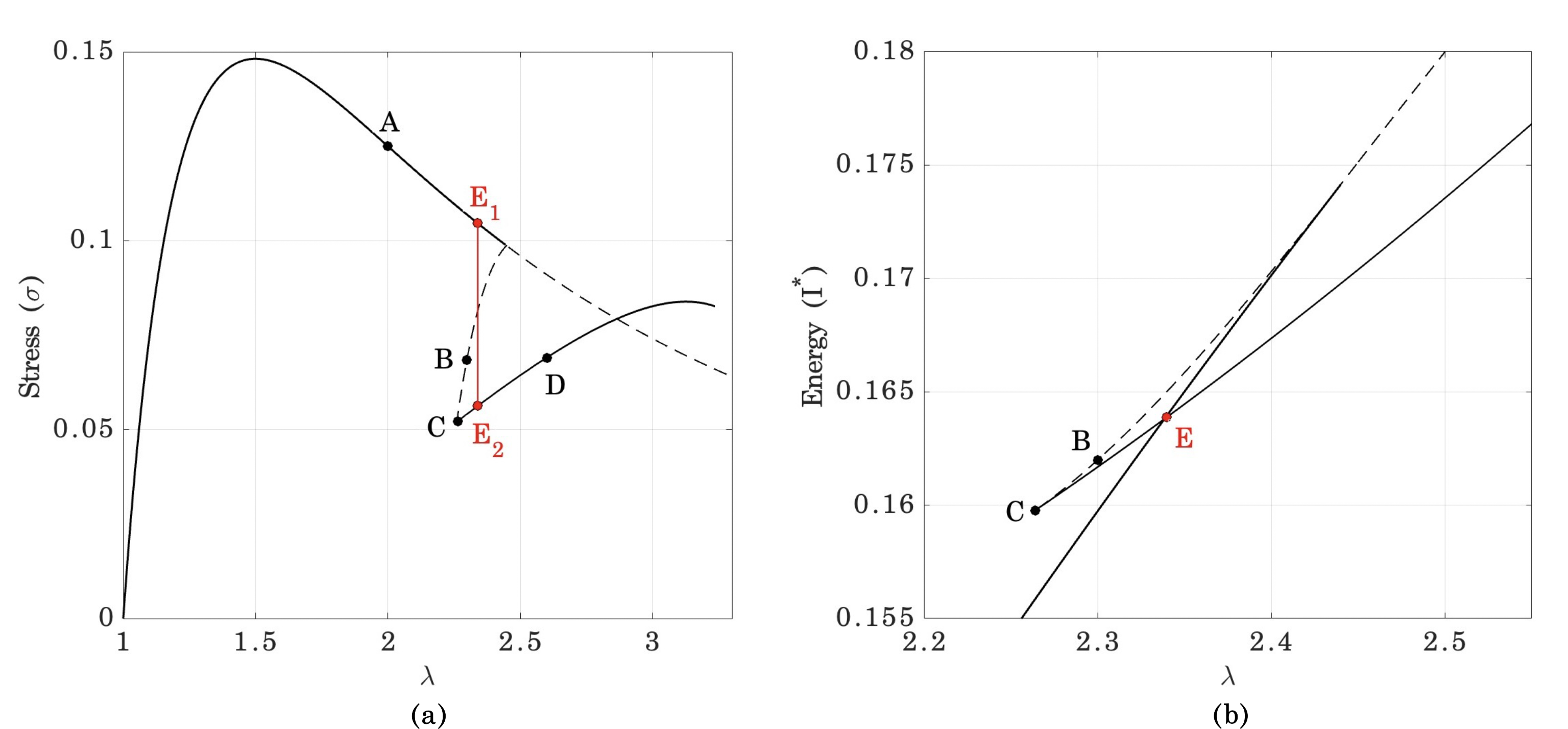}
\caption{Stress and energy curves for parameter values $\epsilon=0.03,\,\beta=3,\, \,k=2$. (a) Stress ($\sigma$) vs stretch ($\lambda$). (b) Energy ($I^*$) vs stretch ($\lambda$). Solid and dashed black lines show stable and unstable equilibria respectively. Red line shows transition between the two stable solutions.}  
\label{stress3}
\end{figure}

The solid and dashed lines shown in Figure\ \ref{stress3} represent stable and unstable equilibria, respectively. The points A, B, C, and D shown are chosen to illustrate various features of solutions. The deformation $f$ on $[0,1]$ and the inverse deformation gradient $H$ on $[0,\lambda]$ are depicted in Figure\ \ref{disp2} for each of those points. The marker E in red corresponds to the value of $\lambda$ at which the energy of the two stable solutions are equal.

The trivial solution $u\equiv 0$ is locally stable for all $\lambda$ values less than the bifurcation point at $\lambda=2.4490$, after which stability is lost. These are homogeneous solutions, and the configuration corresponding to point A in Figure\ \ref{stress3} is depicted in Figure\ \ref{disp2}(a); the constant inverse strain is shown in Figure\ \ref{disp2}(b). The bifurcation is a subcritical pitchfork; in particular, it is locally unstable. Globally, the solutions along the two ``sides'' of the pitchfork are equivalent - related by reflection symmetry. As such, their projections overlap in both Figure\ \ref{stress3}(a) and (b). The configuration corresponding to point B is shown in Figure\ \ref{disp2}(c), and the associated inverse strain in Figure\ \ref{disp2}(d).
\begin{figure}
\centering
\includegraphics[width=\linewidth]{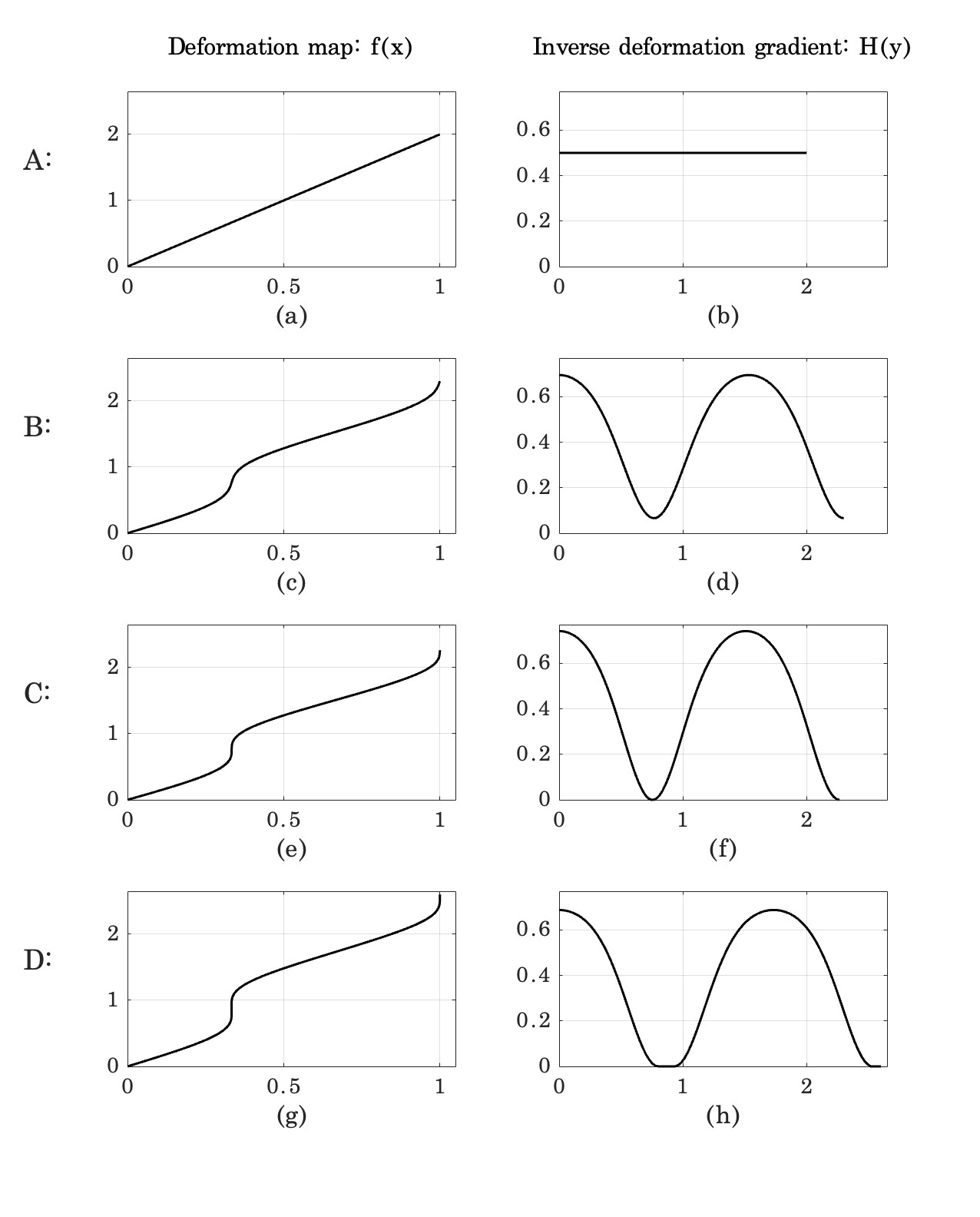}
\caption{Deformation $f(x)$ and inverse deformation gradient $H(y)$ at 4 different points for comparison (see Figure\ \ref{stress3}). Point A is before bifurcation. Point B is on the unstable non-trivial post-bifurcation curve. Point C is the point where cracks first appear. Point D is on the stable curve post-fracture.}  
\label{disp2}
\end{figure}

Point C in Figure\ \ref{stress3} corresponds to the nucleation of cracks. The configuration and inverse strain with two zero values are shown in Figure\ \ref{disp2}(e) and (f), respectively. Since the cracked outer layer continues to interact with the pseudo-rigid bar, the stress does not drop to zero (as in \cite{RH}). As shown in Figure \ref{stress3}, a stable branch emerges from point C, along which the cracks widen and the outer layer continues to interact with the pseudo-rigid foundation. Figure \ref{disp2}(g) and (h) depict the configuration and inverse strain fields, respectively, corresponding to the point D in Figure \ref{stress3}, clearly demonstrating the growth of the cracks at $x=1/3$ and $x=1$ in the reference configuration. We remark that two other distinct solution branches emanate from the point D as well. However, both branches contain ``healing'' solutions, viz., one of the two cracks closes up. Also, both branches are unstable. As such, we do not consider them here. Finally, the cracked solutions in Figure \ref{stress3}, along the branch containing the segment C-D, have lower energy than the trivial, homogeneous solutions, at all values of $\lambda>2.3385$, which corresponds to the point E.  

Figures\ \ref{stress4}-\ref{disp4} present the same information just discussed, but for the case $k=2.5$. The difference here with $n=4$ is that the two sides of the pitchfork lead to ostensibly different fracture patterns. In particular, Figure\ \ref{disp3}(e)-(h) corresponding to points H and I in Figure \ref{stress4}, indicate three cracks - one at each end and one at the midpoint. On the other side of the pitchfork with typical solutions shown in Figure\ \ref{disp4}(e)-(h) and also corresponding to points H and I in Figure\ \ref{stress4}, we observe 2 symmetrically spaced interior cracks. In spite of these observations, the solutions and their energies again overlap on Figure\ \ref{stress4} - just as in the previous case $k=2$ $(n=3)$. As discussed in \cite{RH}, the strain gradient surface energy associated with fracture implies that fewer cracks ``cost'' less. But end cracks are different from interior cracks: the former involve only one fractured ``face'', whereas the latter involves two. In other words, the two end cracks in Figure\ \ref{disp3}(e)-(h) are equivalent to an interior crack as in Figure\ \ref{disp4}(e)-(h). This explains the overlap in Figure\ \ref{stress4}(a) and (b). 

\begin{figure}[H]
\centering
\includegraphics[width=\linewidth]{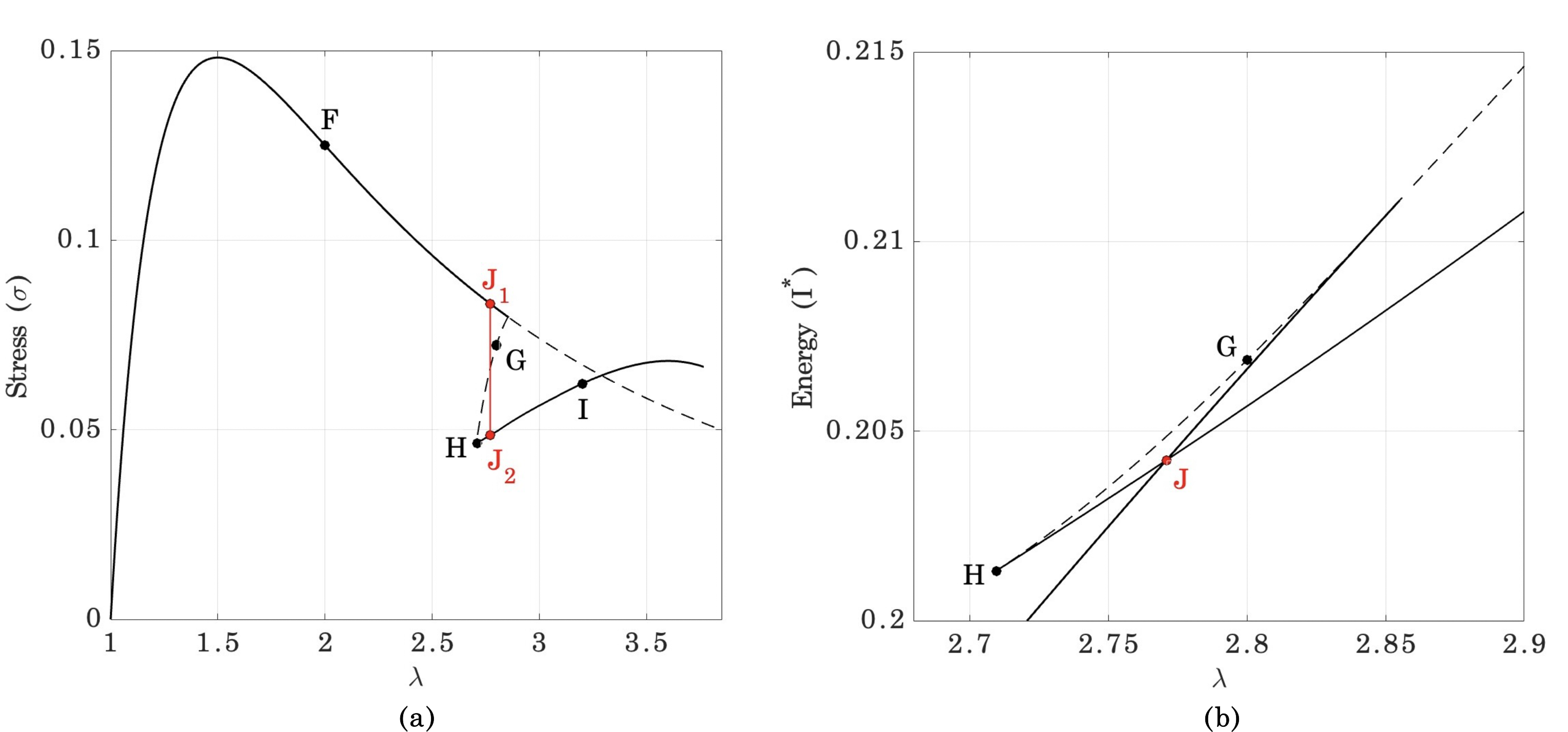}
\caption{Stress and energy curves for parameter values $\epsilon=0.03,\,\beta=3,\, \,k=2.5$. (a) Stress ($\sigma$) vs stretch ($\lambda$). (b) Energy ($I^*$) vs stretch ($\lambda$). Solid and dashed black lines show stable and unstable equilibria respectively. Red line shows transition between the two stable solutions.}  
\label{stress4}
\end{figure}
 
\begin{figure}[H]
\centering
\includegraphics[width=\linewidth]{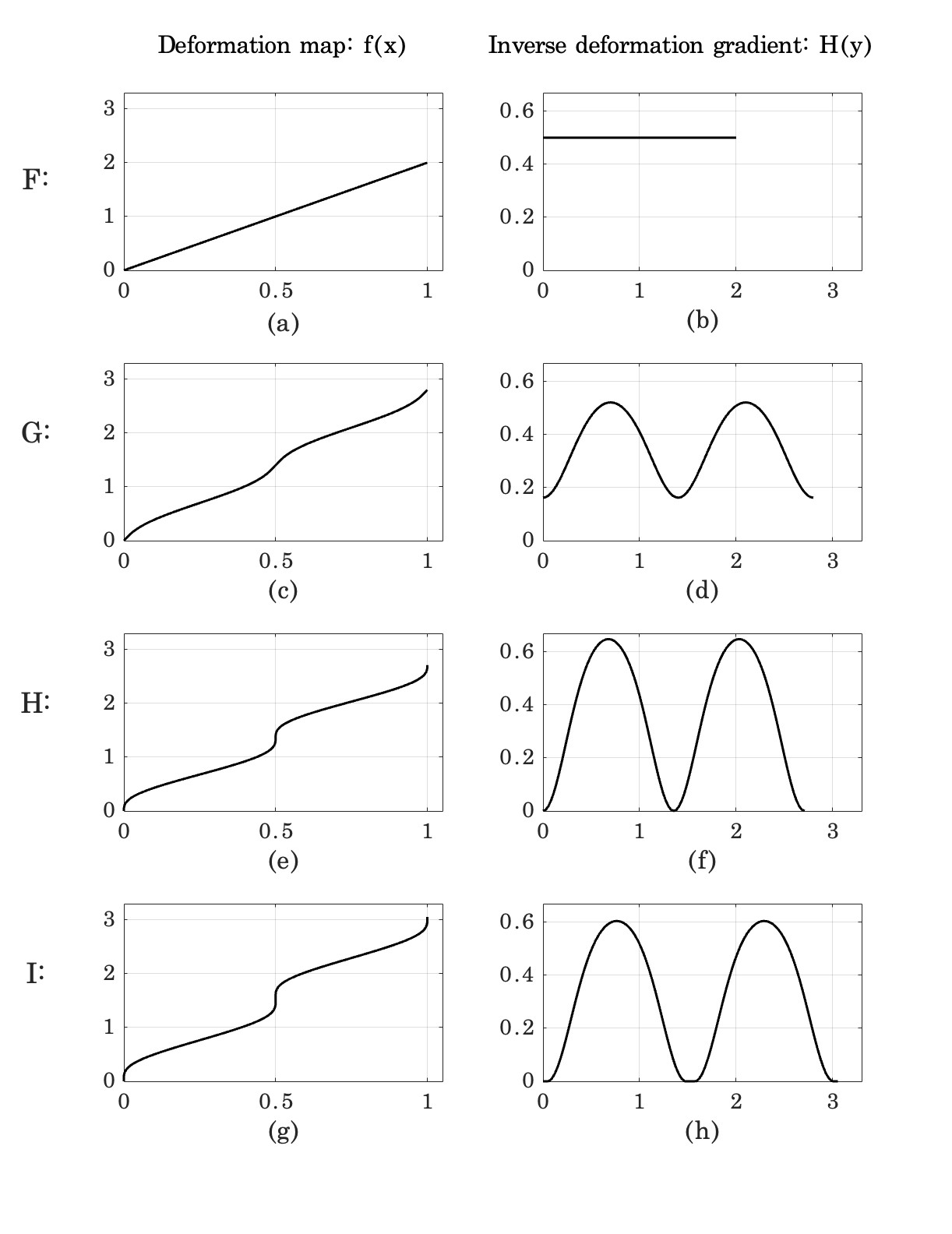}
\caption{Deformation $f(x)$ and inverse deformation gradient $H(y)$ at 4 different points for comparison (see Figure\ \ref{stress4}). Point F is before bifurcation. Point G is on the unstable non-trivial post-bifurcation curve. Point H is the point where cracks first appear. Point I is on the stable curve post-fracture.}  
\label{disp3}
\end{figure}

\begin{figure}[H]
\centering
\includegraphics[width=\linewidth]{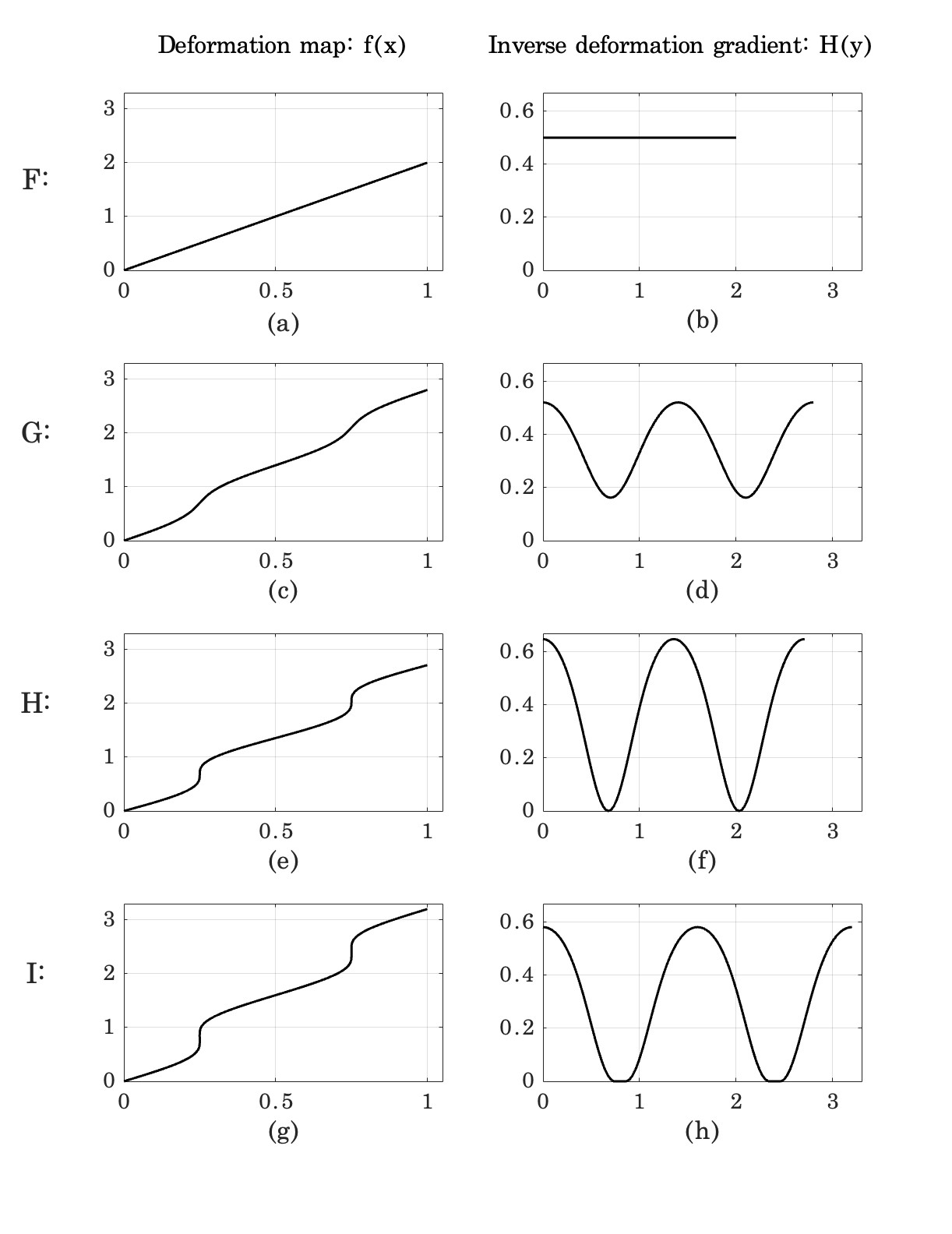}
\caption{Deformation $f(x)$ and inverse deformation gradient $H(y)$ at 4 different points for comparison (see Figure\ \ref{stress4}). Point F is before bifurcation. Point G is on the unstable non-trivial post-bifurcation curve. Point H is the point where cracks first appear. Point I is on the stable curve post-fracture.}  
\label{disp4}
\end{figure}

\section{Concluding remarks}\label{conc}
We present solutions exhibiting the spontaneous nucleation of cracks and their development in a simple model for thin composite fibers. We employ the inverse-deformation approach of \cite{RH}, incorporating neither a damage field nor pre-existing cracks. For specific parameter sets, critical patterns comprising two or three fractures in the outer layer of the composite are illustrated. It should be clear, that we can choose other parameter sets leading to as many cracks in a pattern as desired.

We emphasize the new difficulty here of addressing the unilateral constraint. This is not an issue in \cite{RH}, due to the fact that effective stress in that problem drops to zero (identically) upon nucleation, after which cracks open freely under hard loading. In contrast, the fractured outer layer continues to interact with the elastic core in our problem. We compute these solutions via the active-set method \cite{NW} as applied to our discretized system. 

If we drop the assumption leading to \eqref{assump}, i.e., make no assumption leading to a pseudo-rigid foundation, the problem is more difficult. For instance, \eqref{ELeq1} becomes a field equation; a two-rod model is now required. Moreover, from the experimental results motivating our work, it is reported that the elastic core eventually fractures as well \cite{NC}. To predict that phenomenon, a stronger, brittle model for the core is also needed. This leads to an even more challenging problem, with each rod now requiring its own inverse-deformation description. These problems are stepping stones to the more realistic case of a 2D brittle solid, all of which we plan to pursue in the future. 

\vspace{1em}
\textbf{Acknowledgements} This work was supported in part by the National Science Foundation through grant DMS-2006586, which is gratefully acknowledged. We thank Phoebus Rosakis, Patrick Farrell, David Bindel, Maximilian Ruth, and Gokul Nair for useful conversations.

\bibliographystyle{hacm.bst}      
\bibliography{pseudo.bib}
\end{document}